\theoremstyle{plain}
\newtheorem{theorem}{Theorem}
\newtheorem{lemma}[theorem]{Lemma}
\newtheorem{rmk}{Remark}
\theoremstyle{remark}
\begin{document}

\begin{center}
 \Large\bf{ Statistical inference and Bayesian optimal life-testing plans under Type-II unified hybrid censoring scheme}
\end{center}
 \begin{center}
 \bf {\footnotesize Tanmay Sen$^{1}$, Ritwik Bhattacharya\footnote[2]{\textit{Corresponding author}: Ritwik Bhattacharya (ritwik.bhatta$@$gmail.com)}, Biswabrata Pradhan$^{3}$ and Yogesh Mani Tripathi$^4$}
 \end{center}
  \begin{center}
 \noindent\textit{\scriptsize $^1$ Data Science and Artificial Intelligence, L \& T Technology Services, Karnataka 560045, India }\\
  \noindent\textit{\scriptsize $^2$ Department of Industrial Engineering, School of Engineering and Sciences, Tecnol\'{o}gico de Monterrey, Quer\'{e}taro 76130,  Mexico}\\
  \noindent\textit{\scriptsize $^{3}$ SQC and OR Unit, Indian Statistical Institute, Kolkata 700108, India} \\
  \noindent\textit{\scriptsize $^4$ Department of Mathematics, Indian Institute of Technology, Patna 801106, India }\\
 \end{center}

\begin{abstract}
This article describes the inferential procedures and Bayesian optimal life-testing issues under Type-II unified hybrid censoring scheme. First, the  explicit expressions of expected number of failures, expected duration of testing and Fisher information matrix for the unknown parameters of the underlying lifetime model are derived. Then, using these quantities, the Bayesian optimal life-testing plans are computed in subsequent section. A cost constraint D-optimal optimization problem has been formulated and the corresponding solution algorithm is provided to obtain optimal plans. Computational procedures are illustrated through numerical examples.   
\end{abstract}

{\textbf{Keywords}}: Fisher information matrix, Log-normal distribution, Optimal life-testing plan, Prior distribution.

\section{Introduction} \label{sec1}
\paragraph{}
The two fundamental censoring schemes widely used for conducting life-tests are Type-I and Type-II censoring schemes. Under Type-I censoring, lifetimes are observed only up to a pre-specified time point, say $T_{1}$. In Type-II censoring, life-test is terminated when a pre-specified number of units $r (\leq n)$  have failed where, $n$ is the initial number of units placed on testing. \cite{Epstein_1954} introduced hybrid censoring scheme which is a  combination of both Type-I and Type-II censoring schemes. In Type-I hybrid censoring scheme, for pre-fixed $r$ and $T_1$, the test is conducted upto the time of $r$th failure or time point $T_{1}$, whichever occurs first. \cite{Childs_2003} proposed the Type-II hybrid censoring scheme, in which a life-test is terminated at the time of $r$th failure or at time $T_1$, whichever occurs later. In recent years, generalizing these schemes became an interesting aspects to the researchers. For instances, \cite{Chandrasekar_2004} introduced generalized Type-I and Type-II hybrid censoring schemes, and later on \cite{Balakrishnan_2008} introduced Type-I and Type-II unified hybrid censoring schemes. In this article, we consider Type-II unified hybrid censoring scheme (Type-II UHCS) which is the generalization of generalized Type-I and Type-II hybrid censoring schemes. The Type-II UHCS can be described as follows. The testing starts with $n$ units and alongside two integers $l, r\in \{1, 2,\cdots, n\}$ and two time points $T_1, T_2 \in (0, \infty)$ are chosen such that $l<r$ and $T_1<T_2$. If the $r$th failure occurs before time $T_1$, terminate the test at $T_1$. If the $l$th failure occurs before $T_1$ and $r$th failure occurs between $T_1$ and $T_2$, terminate the test at $r$th failure time. If the $l$th failure occurs before $T_1$ and $r$th failure occurs after $T_2$, terminate the test at $T_2$. If the $l$th failure occurs after $T_1$ and $r$th failure occurs before $T_2$, terminate the experiment at  $r$th failure time. If the $l$th failure occurs after $T_1$ and $r$th failure occurs after $T_2$, terminate the test at $T_2$. Finally, if the $l$th failure occurs after time $T_2$, terminate the experiment at $l$th failure time. The advantage of Type-II UHCS is that it ensures at least $l$  failures and the maximum test duration is $T_2$. {\color{blue}If we consider $l=0$, then Type-II UHCS reduces to Type-II GHCS. And if We don't fix any upper bound of censoring time i.e. if $T_2 -> \infty$, then Type-II UHCS reduces to Type-I GHCS.} A schematic representation of Type-II UHCS is given in Figure 1.

The aim of this paper is two-fold. Firstly, we derived the explicit expressions of expected number of failures and expected duration of testing under Type-II unified hybrid censoring scheme. We also derived the analytical expression of the Fisher information matrix about the unknown parameter(s) of the lifetime distribution under Type-II unified hybrid censoring data. Secondly, the procedures of computing Bayesian optimal life-testing plans under Type-II UHCS are discussed. \cite{Zahar_1996} discussed Bayesian life-testing plans using information based criterion under Type-I censoring scheme. \cite{Zhang_2005} described Bayesian life-test planning for the Weibull distribution with given shape parameter under Type II censoring scheme. \cite{Hong_2015} discussed Bayesian life-test planning under Type-I censoring for log-location-Scale family of distributions. \cite{Ritwik_2018} discussed Bayesian design of life-testing plans under hybrid censoring scheme. A Fisher information based Bayesian design criterion proposed by \cite{Roy_2017} under progressive Type-I interval censoring scheme. As per our best knowledge is concerned, no work has been done  
\begin{figure}[htbp]
	\tiny
	\begin{tikzpicture}[scale=0.7]
    \draw[thick] (0,0) -- (6,0);
	\draw[thick,dotted] (6,0) -- (8,0);
	\draw[thick] (8,0) -- (18,0);
	\draw [fill] (0, 0) circle [radius=0.1];
	\node[left] at (0,0) {\tiny{\textbf{Start at 0}}};
	\draw [fill] (1, 0) circle [radius=0.1];
	\draw[->] (1,0) -- (2,2);
	\node[below] at (1,0) {\tiny{$X_{1:n}$}};
	\node[right] at (2,2) {\tiny{1st failure}};
	\draw [fill] (4, 0) circle [radius=0.1];
	\draw[->] (4,0) -- (5,2);
	\node[below] at (4,0) {\tiny{$X_{2:n}$}};
	\node[right] at (5,2) {\tiny{2nd failure}};
    \draw [fill] (9, 0) circle [radius=0.1];
	\draw[->](9,0) -- (10,2);
	\node[below] at (9,0) {\tiny{$X_{l:n}$}};
	\node[right] at (10,2) {\tiny{lth failure}};
	\draw [fill] (12, 0) circle [radius=0.1];
	\draw[->](12,0) -- (13,2);
	\node[below] at (12,0) {\tiny{$X_{r:n}$}};
	\node[right] at (13,2) {\tiny{rth failure}};
	\draw [fill] (14, 0) circle [radius=0.1];
	\node[below] at (14,0) {\tiny{$T_1$}};
	\node[below] at (14, -0.5) {\tiny{\textbf{Stop at $T_1$}}};
	\draw [fill] (16, 0) circle [radius=0.1];
	\node[below] at (16,0) {\tiny{$T_2$}};
	\node[right] at (18, 0) {};
	\node at (7, -1) {\scriptsize{\textbf{Case I}}};
	\draw[thick] (0,-5) -- (6,-5);
	\draw[thick,dotted] (6,-5) -- (8,-5);
	\draw[thick] (8,-5) -- (18,-5);
	\draw [fill] (0,-5) circle [radius=0.1];
	\node[left] at (0,-5) {\tiny{\textbf{Start at 0}}};
	\draw [fill] (1,-5) circle [radius=0.1];
	\draw[->] (1,-5) -- (2,-3);
	\node[below] at (1,-5) {\tiny{$X_{1:n}$}};
	\node[right] at (2,-3) {\tiny{1st failure}};
	\draw [fill] (4,-5) circle [radius=0.1];
	\draw[->] (4,-5) -- (5,-3);
	\node[below] at (4,-5) {\tiny{$X_{2:n}$}};
	\node[right] at (5,-3) {\tiny{2nd failure}};
	\draw [fill] (9,-5) circle [radius=0.1];
	\draw[->] (9,-5) -- (10,-3);
	\node[below] at (9,-5) {\tiny{$X_{l:n}$}};
	\node[right] at (10,-3) {\tiny{lth failure}};
	\draw [fill] (12,-5) circle [radius=0.1];
	\node[below] at (12,-5) {\tiny{$T_1$}};
	\draw [fill] (14,-5) circle [radius=0.1];
	\draw[->] (14,-5) -- (15,-3);
	\node[below] at (14,-5) {\tiny{$X_{r:n}$}};
	\node[right] at (15,-3) {\tiny{rth failure}};
	\node[below] at (14, -5.5) {\tiny{\textbf{Stop at  $X_{r:n}$}}};
	\draw [fill] (16,-5) circle [radius=0.1];
	\node[below] at (16,-5) {\tiny{$T_2$}};
	\node[right] at (18, -5) {};
	\node at (7, -6) {\scriptsize{\textbf{Case II}}};
	\draw[thick] (0,-10) -- (6,-10);
	\draw[thick,dotted] (6,-10) -- (8,-10);
	\draw[thick] (8,-10) -- (18,-10);
	\draw [fill] (0,-10) circle [radius=0.1];
	\node[left] at (0,-10) {\tiny{\textbf{Start at 0}}};
	\draw [fill] (1,-10) circle [radius=0.1];
	\draw[->] (1,-10) -- (2,-8);
	\node[below] at (1,-10) {\tiny{$X_{1:n}$}};
	\node[right] at (2,-8) {\tiny{1st failure}};
	\draw [fill] (4,-10) circle [radius=0.1];
	\draw[->] (4,-10) -- (5,-8);
	\node[below] at (4,-10) {\tiny{$X_{2:n}$}};
	\node[right] at (5,-8) {\tiny{2nd failure}};
	\draw [fill] (9,-10) circle [radius=0.1];
	\draw[->] (9,-10) -- (10,-8);
	\node[below] at (9,-10) {\tiny{$X_{l:n}$}};
	\node[right] at (10,-8) {\tiny{lth failure}};
	\draw [fill] (12,-10) circle [radius=0.1];
	\node[below] at (12,-10) {\tiny{$T_1$}};
	\draw [fill] (14,-10) circle [radius=0.1];
	\node[below] at (14,-10) {\tiny{$T_2$}};
	\node[below] at (14, -10.5) {\tiny{\textbf{Stop at  $T_2$}}};
	\draw [fill] (16,-10) circle [radius=0.1];
	\draw[->] (16,-10) -- (17,-8);
	\node[below] at (16,-10) {\tiny{$X_{r:n}$}};
	\node[right] at (17,-8) {\tiny{rth failure}};
	\node[right] at (18, -10) {};
	\node at (7, -11) {\scriptsize{\textbf{Case III}}};
	\end{tikzpicture}
\end{figure}  
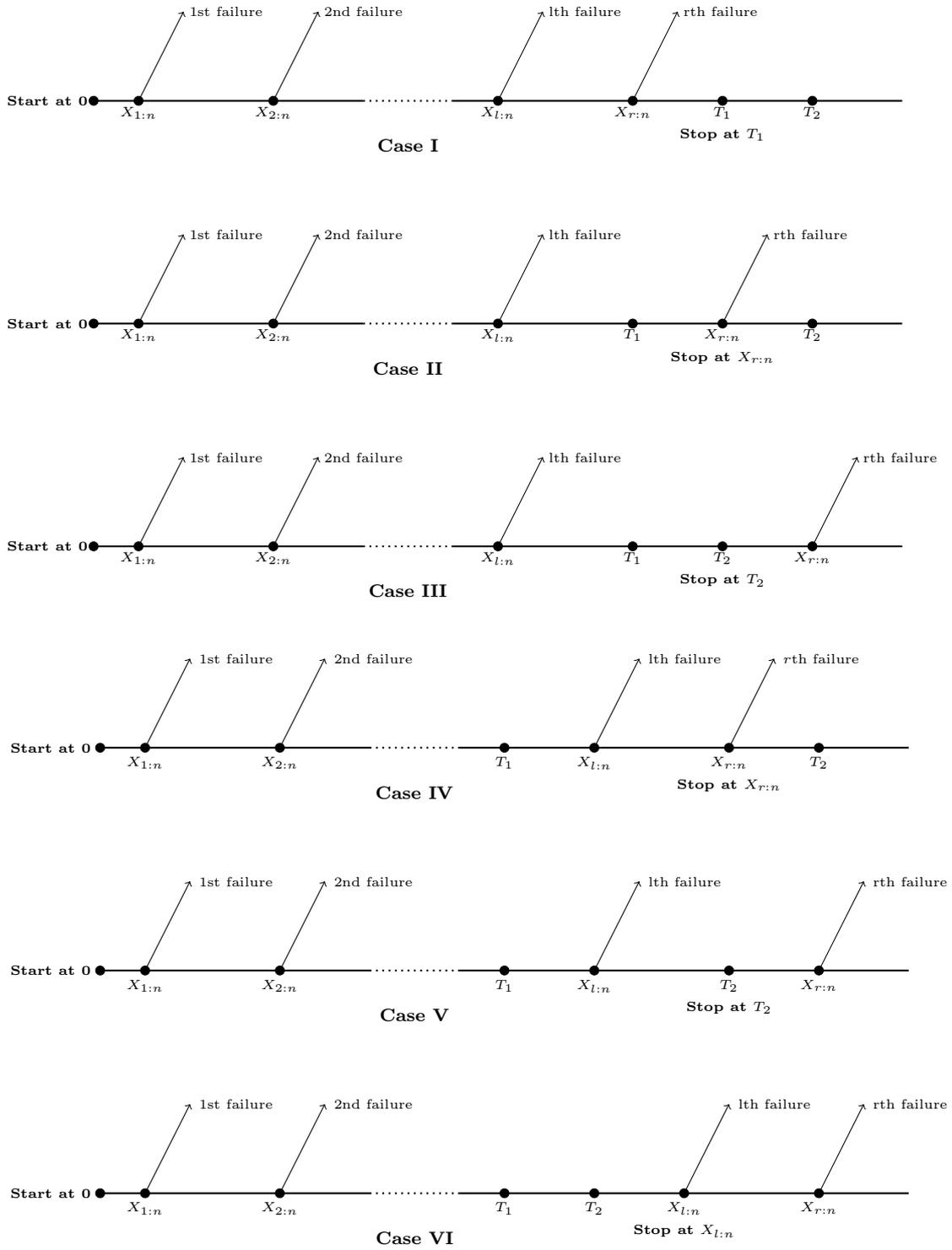
\begin{figure}[htbp]
	\begin{tikzpicture}[scale=0.7]
	
	\draw[thick] (0,0) -- (6,0);
	\draw[thick,dotted] (6,0) -- (8,0);
	\draw[thick] (8,0) -- (18,0);
	
	\draw [fill] (0, 0) circle [radius=0.1];
	\node[left] at (0,0) {\tiny{\textbf{Start at 0}}};
	\draw [fill] (1, 0) circle [radius=0.1];
	\draw[->] (1,0) -- (2,2);
	\node[below] at (1,0) {\tiny{$X_{1:n}$}};
	\node[right] at (2,2) {\tiny{1st failure}};
	
	\draw [fill] (4, 0) circle [radius=0.1];
	\draw[->] (4,0) -- (5,2);
	\node[below] at (4,0) {\tiny{$X_{2:n}$}};
	\node[right] at (5,2) {\tiny{2nd failure}};

	\draw [fill] (9, 0) circle [radius=0.1];
	\node[below] at (9,0) {\tiny{$T_1$}};
	
	\draw [fill] (11, 0) circle [radius=0.1];
	\draw[->](11,0) -- (12,2);
	\node[below] at (11,0) {\tiny{$X_{l:n}$}};
	\node[right] at (12,2) {\tiny{lth failure}};
	
	\draw [fill] (14, 0) circle [radius=0.1];
	\draw[->](14,0) -- (15,2);
	\node[below] at (14,0) {\tiny{$X_{r:n}$}};
	\node[right] at (15,2) {\tiny{$r$th failure}};
	\node[below] at (14, -0.5) {\tiny{\textbf{Stop at $X_{r:n}$}}};

	\draw [fill] (16, 0) circle [radius=0.1];
	\node[below] at (16,0) {\tiny{$T_2$}};
	
	\node[right] at (18, 0) {};
	
	\node at (7, -1) {\scriptsize{\textbf{Case IV}}};
	
	\draw[thick] (0,-5) -- (6,-5);
	\draw[thick,dotted] (6,-5) -- (8,-5);
	\draw[thick] (8,-5) -- (18,-5);
	
	\draw [fill] (0,-5) circle [radius=0.1];
	\node[left] at (0,-5) {\tiny{\textbf{Start at 0}}};
	
	\draw [fill] (1,-5) circle [radius=0.1];
	\draw[->] (1,-5) -- (2,-3);
	\node[below] at (1,-5) {\tiny{$X_{1:n}$}};
	\node[right] at (2,-3) {\tiny{1st failure}};
	
	\draw [fill] (4,-5) circle [radius=0.1];
	\draw[->] (4,-5) -- (5,-3);
	\node[below] at (4,-5) {\tiny{$X_{2:n}$}};
	\node[right] at (5,-3) {\tiny{2nd failure}};
	
	\draw [fill] (9,-5) circle [radius=0.1];
	\node[below] at (9,-5) {\tiny{$T_1$}};
	
	\draw [fill] (11,-5) circle [radius=0.1];
	\draw[->] (11,-5) -- (12,-3);
	\node[below] at (11,-5) {\tiny{$X_{l:n}$}};
	\node[right] at (12,-3) {\tiny{lth failure}};

	\draw [fill] (14,-5) circle [radius=0.1];
	\node[below] at (14,-5) {\tiny{$T_2$}};
	\node[below] at (14, -5.5) {\tiny{\textbf{Stop at  $T_2$}}};
	
	\draw [fill] (16,-5) circle [radius=0.1];
	\draw[->] (16,-5) -- (17,-3);
	\node[below] at (16,-5) {\tiny{$X_{r:n}$}};
	\node[right] at (17,-3) {\tiny{rth failure}};
	\node[right] at (18, -5) {};
	
	\node at (7, -6) {\scriptsize{\textbf{Case V}}};
	
	
	\draw[thick] (0,-10) -- (6,-10);
	\draw[thick,dotted] (6,-10) -- (8,-10);
	\draw[thick] (8,-10) -- (18,-10);
	
	\draw [fill] (0,-10) circle [radius=0.1];
	\node[left] at (0,-10) {\tiny{\textbf{Start at 0}}};
	
	\draw [fill] (1,-10) circle [radius=0.1];
	\draw[->] (1,-10) -- (2,-8);
	\node[below] at (1,-10) {\tiny{$X_{1:n}$}};
	\node[right] at (2,-8) {\tiny{1st failure}};
	
	\draw [fill] (4,-10) circle [radius=0.1];
	\draw[->] (4,-10) -- (5,-8);
	\node[below] at (4,-10) {\tiny{$X_{2:n}$}};
	\node[right] at (5,-8) {\tiny{2nd failure}};
	
	\draw [fill] (9,-10) circle [radius=0.1];
	\node[below] at (9,-10) {\tiny{$T_1$}};
	
	\draw [fill] (11,-10) circle [radius=0.1];
	\node[below] at (11,-10) {\tiny{$T_2$}};

	\draw [fill] (13,-10) circle [radius=0.1];
	\draw[->] (13,-10) -- (14,-8);
	\node[below] at (13,-10) {\tiny{$X_{l:n}$}};
	\node[right] at (14,-8) {\tiny{lth failure}};
	\node[below] at (13, -10.5) {\tiny{\textbf{Stop at  $X_{l:n}$}}};
	
	\draw [fill] (16,-10) circle [radius=0.1];
	\draw[->] (16,-10) -- (17,-8);
	\node[below] at (16,-10) {\tiny{$X_{r:n}$}};
	\node[right] at (17,-8) {\tiny{rth failure}};
	\node[right] at (18, -10) {};
	
	\node at (7, -11) {\scriptsize{\textbf{Case VI}}};
	
	\end{tikzpicture}
	\caption{Schematic representation of Type-II UHCS.}
	\label{UhybFig1}
\end{figure}in the literature on the issues of Bayesian optimal life-testing plan under Type-II UHCS. Moreover, all the previous works were based on sample size determination problem by keeping other design parameters as constant. Nevertheless, in this study, we proposed an optimal design strategy to obtain the optimal values of all design parameters $(n,r,l,T_1,T_2)$ by solving a cost constraint D-optimality criterion. A flowchart representation of the solution algorithm is provided in the relevant section. To reduce the computational complexity, we used the  approximation technique of the Bayesian posterior quantities which are the components of optimality criterion. The proposed method is illustrated through various numerical examples.

The organization of the paper is as follows. In Section 2, some results on Type-II UHCS are derived which will be required to formulate the Bayesian optimality criterion in the subsequent sections. The expressions of Fisher information matrix is also derived in this section. Formulation and computational strategies of the optimal Bayesian life-testing plan are described in Section 3. The solution algorithm and the numerical illustrations are also presented in this section. Finally, some concluding remarks are made in Section 4.

\section{Some results on Type-II UHCS}
\paragraph{}
Suppose that $X_1, X_2.\cdots, X_n$ are the lifetimes of $n$ testing units with common distribution function $F(\cdot; \theta),$ where $\theta$ is parameter(s). The corresponding ordered lifetimes are $X_{1:n}<X_{2:n}<\cdots<X_{n:n}$. Let $D$ and $\xi$ represent the number of failures and the duration of the life-test under a Type-II UHCS, respectively. Then, $(X_{1:n}, X_{2:n},..., X_{D:n}, \xi)$ represents a Type-II UHCS data defined as 
\[ (D,\xi) = \left\{ \begin{array}{lll}
	(D_1,T_1) & \mbox{if $X_{l:n}<X_{r:n}<T_1<T_2, ~~ \mbox{where}~~  D_1= r,r+1,\ldots,n$},\\
	(r,X_{r:n}) & \mbox{if $X_{l:n}<T_1<X_{r:n}<T_2$},\\
	(D_2,T_2) & \mbox{if $X_{l:n}<T_1<T_2<X_{r:n}, ~~ \mbox{where}~~  D_2=l,l+1,\ldots,r-1$},\\
	(r,X_{r:n}) & \mbox{if $T_1< X_{l:n}<X_{r:n}<T_2$},\\
	(D_2,T_2) & \mbox{if $T_1<X_{l:n}<T_2<X_{r:n}, ~~ \mbox{where}~~  D_2=l,l+1,\ldots, r-1$},\\
	(l,X_{l:n}) & \mbox{if $T_1< T_2<X_{l:n}<X_{r:n}$.}
	\end{array} \right. \]	
 Next, we  derive the expressions of the expected numbers of failures $E[D]$, the expected duration of the test $E[\xi]$ and the Fisher information matrix  under Type-II UHCS data, which will be required to construct optimal Bayesian design in Section 3. For notational convenience, we write $\mbox{min}(x,y)$ and $\mbox{max}(x,y)$ as $x \wedge y$ and $x \vee y$, respectively.  
 
 \begin{theorem}
 	The expected number of failures under Type-II UHCS is given as
 	\begin{equation*}
 	E[D]=l+nF(T_1, \theta)+N_{X_{r:n}\wedge T_2}-N_{X_{l:n}\wedge T_2} -N_{X_{r:n}\wedge T_1},
 	\end{equation*}where $N_{X_{r:n}\wedge T_2}, N_{X_{l:n}\wedge T_2}$ and $N_{X_{r:n}\wedge T_1}$ denote the  expected number of failures under the Type-I hybrid censoring schemes $(r, T_2), (l, T_2)$ and $(r, T_1)$, respectively.	
 \end{theorem}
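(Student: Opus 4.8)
The plan is to reduce everything to a single sample-path identity for the random failure count $D$ and then take expectations term by term. Write $N(t)=\sum_{i=1}^{n} I(X_{i:n}\le t)$ for the number of units failed by time $t$, so that $N(X_{m:n})=m$ exactly and $E[N(t)]=nF(t,\theta)$ by linearity. Two observations set up the argument. First, in each of the six cases defining $(D,\xi)$, the recorded number of failures is precisely $N(\xi)$, the failure count at the termination time (for instance $N(X_{r:n})=r$ in Cases II and IV, and $N(T_2)=D_2$ in Cases III and V). Second, the three reference quantities in the statement are exactly
\[
N_{X_{r:n}\wedge T_2}=E[N(X_{r:n}\wedge T_2)],\quad N_{X_{l:n}\wedge T_2}=E[N(X_{l:n}\wedge T_2)],\quad N_{X_{r:n}\wedge T_1}=E[N(X_{r:n}\wedge T_1)],
\]
since a Type-I hybrid censoring scheme $(m,T)$ terminates at $X_{m:n}\wedge T$ and records $N(X_{m:n}\wedge T)$ failures.

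The core step is to establish the deterministic identity
\[
D=l+N(T_1)+N(X_{r:n}\wedge T_2)-N(X_{l:n}\wedge T_2)-N(X_{r:n}\wedge T_1)
\]
on every sample path. I would verify this case by case: in each of the six orderings of $X_{l:n},X_{r:n},T_1,T_2$, I resolve each minimum $X_{m:n}\wedge T$ into either the order statistic (contributing $r$ or $l$) or the time point (contributing $N(T_1)$ or $N(T_2)$), and then check that the right-hand side collapses to the value of $D$ for that case. For example, in Case I ($X_{r:n}<T_1<T_2$) the three truncated counts are $r,l,r$, so the right side reduces to $l+N(T_1)+r-l-r=N(T_1)=D_1$; in Case VI ($T_2<X_{l:n}$) they are $N(T_2),N(T_2),N(T_1)$, so the right side reduces to $l=D$.

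With the identity verified on every path, the conclusion follows immediately by taking expectations: linearity gives $E[N(T_1)]=nF(T_1,\theta)$ and identifies the remaining three expectations with the stated Type-I hybrid censoring quantities, producing the claimed formula.

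I expect the case bookkeeping in the middle step to be the only real obstacle; it is routine but error-prone, since the cancellations depend on correctly matching each $\wedge$ against the case-defining ordering and on using $N(X_{r:n})=r$, $N(X_{l:n})=l$ whenever the minimum is attained at the order statistic. The subtle consistency point worth checking explicitly is Cases III and V, where termination is at $T_2$ with the variable count $D_2$: there $N(X_{r:n}\wedge T_2)=N(T_2)$ and $N(X_{l:n}\wedge T_2)=l$, so the right side reduces to $N(T_2)=D_2$, confirming that the identity handles the random-count regime as well as the fixed-count cases.
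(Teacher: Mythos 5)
Your proof is correct: I checked all six case reductions (including Cases II and IV, which you left implicit), and each one collapses to the stated value of $D$, so the pathwise identity $D = l + N(T_1) + N(X_{r:n}\wedge T_2) - N(X_{l:n}\wedge T_2) - N(X_{r:n}\wedge T_1)$ does hold almost surely, and taking expectations finishes the argument. However, your route is genuinely different from the paper's. The paper does no case analysis at all: it writes the termination time as $\xi = S \wedge Q$ with $S = X_{l:n}\vee T_2$ and $Q = X_{r:n}\vee T_1$, invokes an additive rule cited from Park et al.\ at the level of expected counts, $N_{S\wedge Q} = N_{S} + N_{Q} - N_{S\vee Q}$, observes that $S \vee Q = X_{r:n}\vee T_2$, and then reduces each max-type quantity to a Type-I HCS quantity via the complement identity $E[N(X\vee T)] = E[N(X)] + nF(T,\theta) - E[N(X\wedge T)]$. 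What each approach buys: yours is elementary and fully self-contained (no external lemma), and it establishes something slightly stronger, namely that the relation holds as an identity between random variables rather than only in expectation; the price is the case bookkeeping you rightly flag as the fragile part. The paper's argument is shorter and, more importantly, reusable as a template: the identical decomposition and additive rule deliver Theorem 2 (expected duration, with $N$ replaced by $C$) and, in information form, the Fisher information decomposition of Theorem 3, so all three results follow one scheme. It is also worth noting that your pathwise identity is exactly the reason the cited additive rule holds in this setting, so in effect you have proved the paper's key lemma rather than quoted it.
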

 \begin{proof}
 	Following the notations from Theorem 1 and using the additive rule \citep[see][]{Park_2009}, we can evaluate the expected duration of life-test as
 	\begin{equation}\label{c33}
 	E[D] = N_{S \wedge Q} = N_{S}+N_{Q}-N_{S\vee Q}, 
 	\end{equation}
 	where $N_{S} = E[D| X_{l:n}\vee T_2]$, $N_{Q} = E[D| X_{r:n}\vee T_1]$ and $N_{S\vee Q} = E[D| (X_{l:n}\vee T_2) \vee (X_{r:n}\vee T_1)] = E[D| X_{r:n}\vee T_2]$. Again, on simplification, we have
 	\begin{eqnarray*}
 		N_{S}&=&nF(T_2, \theta) +l-N_{X_{l:n}\wedge T_2}, \\
 		N_{Q}&=&nF(T_1, \theta) +r-N_{X_{r:n}\wedge T_1}, \\
 		N_{S\vee Q}&=&nF(T_2, \theta) +r-N_{X_{r:n}\wedge T_2}.
 	\end{eqnarray*}Hence, replacing the above expressions in (\ref{c33}), the desired result follows as
 	\begin{equation*}
 	E[D]=l+nF(T_1, \theta)+N_{X_{r:n}\wedge T_2}-N_{X_{l:n}\wedge T_2} -N_{X_{r:n}\wedge T_1}.
 	\end{equation*}
 \end{proof}
 \begin{rmk}
 	The expressions for $N_{X_{r:n}\wedge T_2}$, $N_{X_{l:n}\wedge T_2}$ and $N_{X_{r:n}\wedge T_1} $ are given by
 	\begin{eqnarray*}
 		N_{X_{r:n}\wedge T_2}&=&\sum_{i=1}^{r}F_{i:n}(T_2, \theta) dx,\\
 		N_{X_{l:n}\wedge T_2}&=& \sum_{i=1}^{l}F_{i:n}(T_2, \theta) dx, \\
 		N_{X_{r:n}\wedge T_1}&=&\sum_{i=1}^{r}F_{i:n}(T_1, \theta) dx,
 	\end{eqnarray*}where $F_{i:n}(\cdot, \theta)$ is the distribution function of $X_{i:n}$.
 \end{rmk}

Now we derive the expression of $E(\xi)$. Note that $\xi = (x_{l:n} \vee T_2) \wedge (x_{r:n} \vee T_1)$. Then we have the following result. 
 
\begin{theorem}
	
 The expected duration under Type-II UHCS is given as
	\begin{equation}\nonumber
		E[\xi]
		=  E[X_{l:n}]+T_1+C_{X_{r:n}\wedge T_2}-C_{X_{l:n}\wedge T_2} -C_{X_{r:n}\wedge T_1}
	\end{equation}
where $C_{X_{r:n}\wedge T_2}$, $C_{X_{l:n}\wedge T_2}$ and $C_{X_{r:n}\wedge T_1}$ denote the expected duration under Type-I hybrid censoring schemes $(r,T_2)$, $(l,T_2)$ and $(r,T_1)$, respectively. 

\end{theorem}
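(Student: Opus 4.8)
The plan is to mirror the proof of Theorem 1, exploiting the fact that the duration $\xi$ has exactly the same $\min$-of-$\max$ structure as the failure count $D$. Writing $S = X_{l:n}\vee T_2$ and $Q = X_{r:n}\vee T_1$, the observation $\xi = (X_{l:n}\vee T_2)\wedge(X_{r:n}\vee T_1) = S\wedge Q$ lets me invoke the same additive rule of \cite{Park_2009} that was used for $E[D]$, now applied to the duration functional: $E[\xi] = C_{S\wedge Q} = C_S + C_Q - C_{S\vee Q}$, where $C_S = E[\xi\mid X_{l:n}\vee T_2]$, $C_Q = E[\xi\mid X_{r:n}\vee T_1]$ and $C_{S\vee Q} = E[\xi\mid (X_{l:n}\vee T_2)\vee(X_{r:n}\vee T_1)]$.

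First I would simplify the join $S\vee Q$. Since the orderings $X_{l:n}<X_{r:n}$ and $T_1<T_2$ hold, the four-fold maximum collapses, $(X_{l:n}\vee T_2)\vee(X_{r:n}\vee T_1) = X_{l:n}\vee X_{r:n}\vee T_1\vee T_2 = X_{r:n}\vee T_2$, so that $C_{S\vee Q} = E[X_{r:n}\vee T_2]$. Unlike the failure-count case, the duration functional is simply the identity evaluated at the stopping time, so each of $C_S$, $C_Q$ and $C_{S\vee Q}$ is just the expectation of a single $\max$; there is no need for the monotonicity argument underpinning the failure-count version of the additive rule.

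The next step is to convert each maximum into the corresponding $\min$-type duration, which is where the Type-I hybrid censoring quantities enter. Applying the elementary identity $a\vee b = a + b - (a\wedge b)$ termwise and recognizing $E[X_{l:n}\wedge T_2]=C_{X_{l:n}\wedge T_2}$, etc., as the expected durations of the Type-I HCS $(l,T_2)$, $(r,T_1)$ and $(r,T_2)$, I obtain
\begin{align*}
C_S &= E[X_{l:n}] + T_2 - C_{X_{l:n}\wedge T_2},\\
C_Q &= E[X_{r:n}] + T_1 - C_{X_{r:n}\wedge T_1},\\
C_{S\vee Q} &= E[X_{r:n}] + T_2 - C_{X_{r:n}\wedge T_2}.
\end{align*}
Substituting these into $E[\xi] = C_S + C_Q - C_{S\vee Q}$ and simplifying, the $E[X_{r:n}]$ and $T_2$ contributions cancel, leaving $E[\xi] = E[X_{l:n}] + T_1 + C_{X_{r:n}\wedge T_2} - C_{X_{l:n}\wedge T_2} - C_{X_{r:n}\wedge T_1}$, as claimed.

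The computation is essentially routine once the additive rule is in place, so the only real care needed is bookkeeping: verifying that the join genuinely collapses to $X_{r:n}\vee T_2$ (the step that uses $l<r$ and $T_1<T_2$) and checking that the spurious $E[X_{r:n}]$ and $T_2$ terms introduced by the $\max$-to-$\min$ conversion cancel exactly. I do not anticipate a substantive obstacle beyond this verification.
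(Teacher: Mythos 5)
Your proposal is correct and follows essentially the same route as the paper's own proof: the same decomposition $\xi = (X_{l:n}\vee T_2)\wedge(X_{r:n}\vee T_1)$, the same additive rule of Park et al., the same collapse of the join to $X_{r:n}\vee T_2$, and the same max-to-min conversions of $C_S$, $C_Q$, $C_{S\vee Q}$ followed by cancellation. If anything, you spell out the identity $a\vee b = a+b-(a\wedge b)$ that the paper leaves implicit under ``further simplifications.''
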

\begin{proof}
Suppose $S=X_{l:n}\vee T_2$ and $Q=X_{r:n}\vee T_1$ are two random variables representing two termination times. Now, by using the additive rule \citep[see][]{Park_2009}, we can evaluate the expected duration of time  as
\begin{equation}\label{c11}
	E[\xi] = C_{S \wedge Q}=C_{S}+C_{Q}-C_{S\vee Q} 
\end{equation}
where $C_{S}=E[X_{l:n}\vee T_2]$, $C_{Q}=E[X_{r:n}\vee T_1]$ and 
$C_{S \vee Q}=E[(X_{l:n}\vee T_2) \vee (X_{r:n}\vee T_1)] = E[X_{r:n}\vee T_2]$. Further simplifications of the quantities $C_{S}$, $C_{Q}$ and $C_{S \vee Q}$ gives
\begin{eqnarray*}
C_{S}&=&T_2+E[X_{l:n}]-C_{X_{l:n}\wedge T_2}, \\
C_{Q}&=&T_1+E[X_{r:n}]-C_{X_{r:n}\wedge T_1}, \\
C_{S\vee Q}&=&T_2+E[X_{r:n}]-C_{X_{r:n}\wedge T_2}.
\end{eqnarray*}Hence, replacing the above expressions in (\ref{c11}), the desired result follows as
\begin{equation*}
	E[\xi]
	=  E[X_{l:n}]+T_1+C_{X_{r:n}\wedge T_2}-C_{X_{l:n}\wedge T_2} -C_{X_{r:n}\wedge T_1}.
\end{equation*}
\end{proof}
\begin{rmk}
 The expressions for $C_{X_{r:n}\wedge T_2}$, $C_{X_{l:n}\wedge T_2}$ and $C_{X_{r:n}\wedge T_1} $ are given by
 \begin{eqnarray*}
C_{X_{r:n}\wedge T_2}&=&\int_{0}^{T_2}(1-F_{r:n}(x, \theta)) dx,\\
C_{X_{l:n}\wedge T_2}&=& \int_{0}^{T_2}(1-F_{l:n}(x, \theta)) dx,\\
C_{X_{r:n}\wedge T_1}&=&\int_{0}^{T_1}(1-F_{r:n}(x, \theta)) dx. 
.
\end{eqnarray*} 
\end{rmk}

Next, we derive Fisher information matrix under Type-II UHCS. The likelihood function based on Type-II UHCS data is given by  	
\begin{eqnarray*}	
	L(\mu,\tau | x_{1:n}, x_{2:n},..., x_{d_0:n}, \xi) \propto  \prod_{i=1}^{d_0} f_{X}(x_{i:n}; \mu,\tau)(1-F_X(\xi_0; \mu,\tau))^{n-d_0}, 
\end{eqnarray*}	
$x_{i:n}$, $d_0$ and $\xi_0$ denote the observed values of $X_{i:n}$, $D$ and $\xi$, respectively. We present the following two lemmas which will be used in Theorem 3 to derive the expression of Fisher information matrix for $\theta$ under Type-II UHCS.	 
	
\begin{lemma}
	\label{faa}
	The Fisher information about $\theta$ in the Type-II censored data for pre-specified number of failures $l$, $1\leq l \leq n$, is
	\begin{eqnarray*}	
		I_{1\ldots l:n}(\theta)= \int_{0}^{\infty}\bigg \langle \frac{\partial}{\partial \theta}\ln h(x; \theta) \bigg \rangle \sum_{i=1}^{l}f_{i:n}(x; \theta)\, dx,
	\end{eqnarray*}where $h(x; \theta)$ is the hazard function of $X$, $f_{i:n}(x; \theta)$ is the density of $X_{i:n}$, $(\partial/\partial\theta)\ln h(x; \theta)$ is the vector $((\partial/\partial\mu)\ln h(x; \theta), (\partial/\partial\tau)\ln h(x; \theta))^{'}$ and $\langle A \rangle$ is defined as the matrix $A.A^{'},$ for $A\in \mathbb{R}^2$.
\end{lemma}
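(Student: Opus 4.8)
The plan is to reduce the information in the whole Type-II censored sample to a single-observation hazard identity and then reassemble the pieces using the at-risk structure of the order statistics. Write $S(x;\theta)=1-F(x;\theta)$ and abbreviate the hazard-score $\psi(x)=\frac{\partial}{\partial\theta}\ln h(x;\theta)$, which is a two-vector since $\theta=(\mu,\tau)$, so that $\langle\psi(x)\rangle=\psi(x)\psi(x)'$ is the matrix appearing in the statement. First I would establish the single-observation (Efron--Johnstone) identity $I_X(\theta)=E\big[\langle\psi(X)\rangle\big]$. This follows from the decomposition $\frac{\partial}{\partial\theta}\ln f=\psi+\phi$ with $\phi(x)=\frac{\partial}{\partial\theta}\ln S(x;\theta)=-\int_0^x h(u;\theta)\psi(u)\,du$; inserting this into $E[\langle\psi+\phi\rangle]$ and integrating by parts once, using $f=hS$ and $S'=-f$, shows that the cross term and the survival term cancel (their sum integrates to zero), provided the boundary contribution $\phi(x)\phi(x)'S(x)\to0$ as $x\to\infty$.

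For the censored sample I would use the counting-process form of the likelihood. With $N(u)=\sum_{i=1}^{l}\mathbf 1(X_{i:n}\le u)$ counting the observed failures and $Y(u)=\#\{j:X_j\ge u\}$ the number at risk just before $u$, one checks by telescoping the piecewise-constant integral $\int_0^{X_{l:n}}Y(u)h(u;\theta)\,du$ that the log-likelihood of $(X_{1:n},\dots,X_{l:n})$ agrees, up to a $\theta$-free constant, with
\[
\ell(\theta)=\int_0^{X_{l:n}}\ln h(u;\theta)\,dN(u)-\int_0^{X_{l:n}}Y(u)\,h(u;\theta)\,du .
\]
Since $\frac{\partial}{\partial\theta}h=h\psi$, differentiation gives the score in the compact form $U(\theta)=\int_0^{X_{l:n}}\psi(u)\,dM(u)$, where $dM=dN-Y h\,du$ is the compensated failure process. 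The predictable-variation (information) identity, together with the fact that $\langle\psi\rangle$ and $h$ are deterministic, then yields
\[
I_{1\ldots l:n}(\theta)=E\big[\langle U\rangle\big]=E\!\left[\int_0^{X_{l:n}}\langle\psi(u)\rangle\,Y(u)\,h(u;\theta)\,du\right],
\]
so that
\[
I_{1\ldots l:n}(\theta)=\int_0^\infty \langle\psi(u)\rangle\,h(u;\theta)\,E\big[Y(u)\mathbf 1(X_{l:n}\ge u)\big]\,du .
\]

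It then remains to identify the weight. Writing $Y(u)=\sum_j\mathbf 1(X_j\ge u)$ and noting that $\{X_{l:n}\ge u\}$ is the event that at most $l-1$ of the $X_j$ fall below $u$, a symmetry and binomial argument gives $E[Y(u)\mathbf 1(X_{l:n}\ge u)]=nS(u)\sum_{k=0}^{l-1}\binom{n-1}{k}F(u)^k S(u)^{\,n-1-k}$. Multiplying by $h(u)$ and using $hS=f$ together with $f_{i:n}=n\binom{n-1}{i-1}F^{\,i-1}S^{\,n-i}f$ collapses the weight to exactly $\sum_{i=1}^{l}f_{i:n}(u;\theta)$, which is the claimed formula; as a sanity check, the case $l=n$ reduces via $\sum_{i=1}^n f_{i:n}=nf$ to the complete-sample information $nI_X$.

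The hard part will be the regularity bookkeeping rather than any single step: justifying differentiation under the integral/expectation sign, the vanishing of the boundary term $\phi\phi'S$ at infinity in the single-observation identity, and the predictable-variation formula for the stochastic integral (which is precisely where the survival-score contributions of the censoring factor $S(X_{l:n})^{\,n-l}$ must cancel). An entirely elementary alternative that avoids the martingale language is to start from $-E[\partial^2_\theta\ell]$ with $\ell=c+\sum_{i=1}^l\ln f(X_{i:n})+(n-l)\ln S(X_{l:n})$ and integrate by parts term by term against the order-statistic densities $f_{i:n}$; this reproduces the same weight $\sum_{i=1}^l f_{i:n}$ but requires more delicate handling of the boundary term generated at $X_{l:n}$, which is where I expect the bulk of the computation to concentrate.
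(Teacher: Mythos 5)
Your proposal is correct, but it takes a genuinely different route from the paper: the paper gives no argument for this lemma at all, it simply cites Lemma 2.1 of Park (2008), where the hazard-rate representation is obtained by combining the Efron--Johnstone identity with the conditional (Markov) decomposition of order statistics --- the same device the present paper reuses later in proving its Theorem 3. Your counting-process argument is self-contained and sound: the telescoping identity showing that the Type-II log-likelihood equals $\int_0^{X_{l:n}}\ln h(u;\theta)\,dN(u)-\int_0^{X_{l:n}}Y(u)h(u;\theta)\,du$ up to a $\theta$-free constant is verified by summing $(n-j+1)\int_{X_{j-1:n}}^{X_{j:n}}h\,du$; the score is then the martingale integral $\int\psi\,dM$; and the predictable-variation identity packages in one stroke the integration-by-parts cancellations that the elementary derivations must carry out by hand. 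Your weight identification is also correct: by exchangeability, $E[Y(u)\mathbf{1}(X_{l:n}\ge u)]=nS(u)\sum_{k=0}^{l-1}\binom{n-1}{k}F(u)^{k}S(u)^{n-1-k}$, and multiplying by $h$ and using $f=hS$ together with $f_{i:n}=n\binom{n-1}{i-1}F^{i-1}S^{n-i}f$ collapses this to $\sum_{i=1}^{l}f_{i:n}(u;\theta)$, with the $l=n$ case reducing to $nI_X$ as a consistency check. Comparing the two: the paper's citation is economical but opaque, whereas your derivation makes transparent \emph{why} the integrand is (expected number at risk)$\times$(hazard)$\,=\sum_{i=1}^{l}f_{i:n}$, and it extends immediately both to the time-truncated version (Lemma 2 of the paper: simply integrate up to $T_1$) and to the unified scheme itself, since the counting-process information formula is agnostic to the particular stopping rule. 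The regularity issues you flag --- vanishing of the boundary term $\phi\phi'S$ at infinity, differentiation under the integral, square-integrability of the score martingale --- are exactly the conditions implicitly assumed by the paper and by Park, so they constitute no gap specific to your route.
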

\begin{proof}
	See Lemma 2.1 in \cite{Park_2008}.
\end{proof}

\begin{lemma}
	\label{lem2fish}
	The Fisher information about $\theta$  in the Type-I HCS data corresponding to $(r, T_1)$ is given by 
	\begin{eqnarray*}	
		I_{X_{r:n}\wedge T_1}(\theta)= \int_{0}^{T_1}\bigg \langle \frac{\partial}{\partial \theta}\ln h(x;\theta) \bigg \rangle \sum_{i=1}^{r}f_{i:n}(x;\theta)\, dx.
	\end{eqnarray*}
\end{lemma}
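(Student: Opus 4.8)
The plan is to establish the Fisher information for a Type-I hybrid censoring scheme $(r,T_1)$ by recognizing that this scheme is itself a $\min$-combination of two simpler schemes, and then applying the additive rule in the same way as was done for $E[D]$ and $E[\xi]$ in Theorems 1 and 2. Recall that under the Type-I HCS $(r,T_1)$ the test terminates at $X_{r:n}\wedge T_1$, so the stopping time is the minimum of the $r$th order statistic and the fixed time $T_1$. The Fisher information for such a scheme should therefore decompose additively, mirroring the structure $N_{S\wedge Q}=N_S+N_Q-N_{S\vee Q}$ used earlier.

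First I would invoke Lemma~\ref{faa}, which supplies the Fisher information $I_{1\ldots r:n}(\theta)=\int_0^\infty \langle \frac{\partial}{\partial\theta}\ln h(x;\theta)\rangle \sum_{i=1}^r f_{i:n}(x;\theta)\,dx$ for the pure Type-II censored scheme terminating at the $r$th failure. Next I would write down the analogous expression for the pure Type-I censored scheme terminating at the fixed time $T_1$; for Type-I censoring the accumulated Fisher information up to time $T_1$ takes the form $\int_0^{T_1}\langle \frac{\partial}{\partial\theta}\ln h(x;\theta)\rangle\, n f(x;\theta)\,dx$ (equivalently expressed through the hazard-based integrand up to $T_1$). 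The additive rule of \cite{Park_2009}, applied to the information measure exactly as in \eqref{c33} and \eqref{c11}, then gives $I_{X_{r:n}\wedge T_1}=I_{\text{Type-II }r}+I_{\text{Type-I }T_1}-I_{X_{r:n}\vee T_1}$, where the third term corresponds to the $\max$-combination.

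The key simplification I expect is that combining these three integrals collapses the integration domain: the contributions cancel so that the surviving expression is the integral of the common integrand $\langle \frac{\partial}{\partial\theta}\ln h(x;\theta)\rangle$ against $\sum_{i=1}^r f_{i:n}(x;\theta)$ but restricted to the truncated range $(0,T_1)$, yielding precisely
\begin{equation*}
I_{X_{r:n}\wedge T_1}(\theta)=\int_0^{T_1}\bigg\langle \frac{\partial}{\partial\theta}\ln h(x;\theta)\bigg\rangle \sum_{i=1}^{r} f_{i:n}(x;\theta)\,dx.
\end{equation*}
The main obstacle will be verifying rigorously that the three pieces recombine to give the single truncated integral over $(0,T_1)$ with the order-statistic density sum $\sum_{i=1}^r f_{i:n}$ intact, rather than some mixture of order-statistic densities and the parent density $nf$. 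This requires carefully matching the integrand of the $\max$-term $I_{X_{r:n}\vee T_1}$ against the $T_1$-truncated tail of the Type-II term and the $r$-failure tail of the Type-I term, and checking that the hazard-based representation (valid because each $f_{i:n}$ can be written through $h$ and the survival function) makes the cancellation exact on the region $x>T_1$. I would therefore spend most of the effort confirming the pointwise identity of integrands on the two subregions $(0,T_1)$ and $(T_1,\infty)$ before concluding by the additive rule.
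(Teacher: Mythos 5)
There is a genuine gap in your proposal. The paper itself does not derive this lemma at all: its ``proof'' is a citation to Lemma 2.1 of Park et al.\ (2008), where the result is obtained by factorizing the likelihood of the hybrid censored data (via the Markov property of order statistics) and writing the score in terms of the hazard function, exactly as this paper later does for Theorem 3. Your plan instead rests on applying the additive rule $N_{S\wedge Q}=N_S+N_Q-N_{S\vee Q}$ to Fisher information. But in this paper (and in the cited Park 2009 reference) that rule is justified only for \emph{expectations}, where it is an immediate consequence of the pointwise identity $\min(S,Q)+\max(S,Q)=S+Q$ and linearity of $E[\cdot]$. Fisher information is not a linear functional of the stopping time, so the step $I_{X_{r:n}\wedge T_1}=I_{1,\ldots,r:n}+I_{T_1}-I_{X_{r:n}\vee T_1}$ is an unproved assertion, not an application of an available tool. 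It is telling that the paper itself, when it needs the information under the UHCS (Theorem 3), abandons the additive rule and works through the likelihood decomposition.

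The second, related problem is that your argument needs an explicit expression for the max-scheme information $I_{X_{r:n}\vee T_1}(\theta)$ in order to carry out the claimed cancellation, and you never produce one. The cancellation does formally work: if one grants the hazard-based representations
$I_{1,\ldots,r:n}=\int_{0}^{\infty}\langle\,\cdot\,\rangle\sum_{i=1}^{r}f_{i:n}\,dx$,
$I_{T_1}=\int_{0}^{T_1}\langle\,\cdot\,\rangle\sum_{i=1}^{n}f_{i:n}\,dx$ (your $nf$ form is equivalent since $\sum_{i=1}^{n}f_{i:n}=nf$), and
$I_{X_{r:n}\vee T_1}=\int_{0}^{\infty}\langle\,\cdot\,\rangle\sum_{i=1}^{r}f_{i:n}\,dx+\int_{0}^{T_1}\langle\,\cdot\,\rangle\sum_{i=r+1}^{n}f_{i:n}\,dx$,
then the combination collapses to $\int_{0}^{T_1}\langle\,\cdot\,\rangle\sum_{i=1}^{r}f_{i:n}\,dx$ as desired. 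But the third representation is a hybrid-censoring Fisher information result of exactly the same difficulty as the one you are trying to prove, and the standard way to obtain it is the direct likelihood factorization that proves the target lemma in one step. So the route is circular: the ``additive rule for information'' is a consequence of the hazard representations of all the schemes involved, not a device from which any one of them can be deduced. A correct self-contained proof should instead factor the Type-I HCS likelihood into the conditional densities $f_{j\mid j-1:n}$, observe that each factor is a first order statistic truncated/censored at $T_1$, and sum the resulting informations -- which is precisely the Park et al.\ argument the paper cites, and the argument this paper reproduces for Theorem 3.
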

\begin{proof}
	See Lemma 2.1 in \cite{Park_2008}.
\end{proof}

\begin{theorem}
	The Fisher information about $\theta$ under Type-II UHCS is given by
	\begin{eqnarray*}	
		I(\theta)=I_{1,\ldots,l:n}(\theta)+ I_{T_1}(\theta)+ I_{X_{r:n}\wedge T_2}(\theta) - I_{X_{l:n}\wedge T_2}(\theta)-I_{X_{r:n}\wedge T_1}(\theta).
	\end{eqnarray*}
\end{theorem}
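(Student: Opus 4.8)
The plan is to mirror the proofs of Theorems 1 and 2 verbatim, replacing the expected-count functional $N$ and the expected-duration functional $C$ by the Fisher-information functional $I$. As before I set $S=X_{l:n}\vee T_2$ and $Q=X_{r:n}\vee T_1$, so that observation terminates at $\xi=S\wedge Q$, and since $l<r$ and $T_1<T_2$ we have $S\vee Q=(X_{l:n}\vee T_2)\vee(X_{r:n}\vee T_1)=X_{r:n}\vee T_2$. The first step is to invoke the additive rule of \cite{Park_2009} in its Fisher-information form,
\begin{equation*}
I(\theta)=I_{S\wedge Q}(\theta)=I_{S}(\theta)+I_{Q}(\theta)-I_{S\vee Q}(\theta),
\end{equation*}
reducing the problem to the three ``later-of'' (maximum) schemes $I_S$, $I_Q$ and $I_{S\vee Q}$.

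The justification for applying the additive rule to the matrix-valued information—rather than to the scalars $E[D]$ and $E[\xi]$—is the crux, and I would argue it through the hazard-rate representation underlying Lemmas \ref{faa} and \ref{lem2fish}. For any scheme terminating at a stopping time $\tau$, the information admits the form $I_\tau(\theta)=\int_0^\infty\langle\frac{\partial}{\partial\theta}\ln h(x;\theta)\rangle\,dN_\tau(x)$, where $N_\tau$ is the expected-failure-count measure of that scheme. Because the count measures obey the inclusion--exclusion identity $N_{S\wedge Q}+N_{S\vee Q}=N_S+N_Q$ already exploited in Theorem 1, and because $I_\tau$ is linear in $dN_\tau$ with the \emph{common} weight $\langle\frac{\partial}{\partial\theta}\ln h(x;\theta)\rangle$, the additive rule transfers unchanged to $I$. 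This is the step most likely to cause trouble: one must verify that the weight $\langle\frac{\partial}{\partial\theta}\ln h(x;\theta)\rangle$ is identical across all six constituent schemes—which it is, being a property of the parent hazard alone—so that the linear combination of measures may be pulled through the integral.

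Granting the representation, the three maximum-scheme informations follow from the elementary identity $a\vee b=a+b-(a\wedge b)$ applied at the level of the count measures, exactly as $N_S$, $N_Q$, $N_{S\vee Q}$ were obtained in Theorem 1. This gives
\begin{eqnarray*}
I_{S}(\theta)&=&I_{1,\ldots,l:n}(\theta)+I_{T_2}(\theta)-I_{X_{l:n}\wedge T_2}(\theta),\\
I_{Q}(\theta)&=&I_{1,\ldots,r:n}(\theta)+I_{T_1}(\theta)-I_{X_{r:n}\wedge T_1}(\theta),\\
I_{S\vee Q}(\theta)&=&I_{1,\ldots,r:n}(\theta)+I_{T_2}(\theta)-I_{X_{r:n}\wedge T_2}(\theta),
\end{eqnarray*}
where $I_{1,\ldots,l:n}$ is the Type-II information of Lemma \ref{faa}, the terms $I_{T_1}$ and $I_{T_2}$ are the Type-I (fixed-time) informations, and the hybrid terms are instances of Lemma \ref{lem2fish}.

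Finally I would substitute these into the additive rule and collect terms: $I_{1,\ldots,r:n}(\theta)$ cancels between $I_Q$ and $I_{S\vee Q}$, and $I_{T_2}(\theta)$ cancels between $I_S$ and $I_{S\vee Q}$, leaving
\begin{equation*}
I(\theta)=I_{1,\ldots,l:n}(\theta)+I_{T_1}(\theta)+I_{X_{r:n}\wedge T_2}(\theta)-I_{X_{l:n}\wedge T_2}(\theta)-I_{X_{r:n}\wedge T_1}(\theta),
\end{equation*}
which is the asserted formula. Apart from the verification that the additive rule survives passage to the matrix functional, every step is a routine repetition of the bookkeeping already performed for $E[D]$.
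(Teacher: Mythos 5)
Your proof is correct and reaches the stated formula, but it takes a genuinely different route from the paper's. The paper never applies the additive rule to the information matrix at all. Instead it represents the Type-II UHCS data as $(Y_1,\ldots,Y_n)$, where $Y_i=X_{i:n}$ for $i\le l$, $Y_i=(X_{i:n}\wedge T_2,\textbf{I}(X_{i:n}\le T_2))$ for $l<i\le r$, and $Y_i=(X_{i:n}\wedge T_1,\textbf{I}(X_{i:n}\le T_1))$ for $i>r$; it then factorizes the joint density via the Markov property of order statistics into $f_{1,\ldots,l:n}$ times the conditional blocks $f_{l+1,\ldots,r|l:n}$ and $f_{r+1,\ldots,n|r:n}$, writes each conditional block as a product of one-step conditionals $f_{j|j-1:n}$ (each the density of a first order statistic from the truncated residual sample, censored at $T_2$ or $T_1$), and applies Lemmas \ref{faa} and \ref{lem2fish} (Lemma 2.1 of \cite{Park_2008}) factor by factor; summing the integrals gives $I_{l+1,\ldots,r\mid l:n}=I_{X_{r:n}\wedge T_2}-I_{X_{l:n}\wedge T_2}$ and $I_{r+1,\ldots,n\mid r:n}=I_{T_1}-I_{X_{r:n}\wedge T_1}$, and chain-rule additivity of information over the likelihood factorization yields the five-term formula. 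Your proof instead transplants the inclusion--exclusion argument of Theorems 1 and 2 to the matrix functional $I$, which buys brevity and a pleasing uniformity ($N$, $C$ and $I$ all obey the same rule), and your intermediate decompositions of $I_S$, $I_Q$, $I_{S\vee Q}$ and the final cancellation are all correct. What it costs is that the entire burden falls on the asserted representation $I_\tau(\theta)=\int_0^\infty\bigl\langle\frac{\partial}{\partial\theta}\ln h(x;\theta)\bigr\rangle\,dN_\tau(x)$ holding simultaneously for all the stopping rules involved, including the UHCS rule $S\wedge Q$ itself, whose information is the very thing being computed; you rightly flag this as the crux, and your reasoning (common hazard weight, inclusion--exclusion at the level of count measures) is sound, but within your write-up it remains an assertion: it does not follow from the scalar additive rules already used for $E[D]$ and $E[\xi]$, and it needs either a counting-process/stopping-time argument or an explicit appeal to \cite{Park_2009}, where exactly this additivity of Fisher information across minimum/maximum combinations of censoring rules is established. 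With that citation supplied your argument is a complete and shorter alternative; the paper's Markov-factorization proof, in exchange for more bookkeeping, derives the additivity it needs directly from the likelihood factorization and so relies only on the elementary per-factor lemma from \cite{Park_2008}.
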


\begin{proof}
 
 Let us define
 \[ Y_{i} = \left\{ \begin{array}{ll}
              X_{i:n}, & \mbox{for $i=1,2,\cdots,l$},\\
              (X_{i:n}\wedge T_2, \textbf{I}(X_{i:n}\leq T_2)), & \mbox{for $i=l+1,\cdots,r$},\\
              (X_{i:n}\wedge T_1, \textbf{I}(X_{i:n}\leq T_1)) & \mbox{for $i=r+1,\cdots,n$},
                 \end{array}
\right.\]where $\textbf{I}(\cdot)$ is indicator function. Therefore, the Type-II UHCS data can be represented by $$(Y_1,\ldots,Y_{l},Y_{l+1},\ldots,Y_{r},Y_{r+1},\cdots,Y_{n}).$$By the Markov chain property of order statistics, the joint density function of $(Y_1,\cdots,Y_{l},\cdots,Y_{r},\ldots,Y_n)$ can be decomposed as
\begin{eqnarray*}
		&& f_{1,\cdots,l,\cdots,r,\cdots,n:n}(y_1,\cdots,y_l,\cdots,y_{r},\cdots,y_{n};\theta)\\
		&&=f_{1,\cdots,r:n}(y_1,\cdots,y_{r};\theta) \, f_{r+1,\cdots,n|r,\cdots,1:n}(y_{r+1},\cdots,y_{n}|y_r,\cdots,y_1;\theta)\\
		&&= f_{1,\cdots,l:n}(y_1,\cdots,y_{l};\theta) \, f_{l+1,\cdots,r|l:n}(y_{l+1},\cdots,y_{r}|y_l;\theta)f_{r+1,\cdots,n|r:n}(y_{r+1},\cdots,y_{n}|y_r;\theta).
		\label{ent1}
	\end{eqnarray*}	
 Let us denote the Fisher information about $\theta$ corresponding to the joint densities $f_{1,\ldots,l:n}(y_1,\cdots,y_{l}; \theta)$, $f_{l+1,\cdots,r|l:n}(y_{l+1}, \cdots,y_{r}|y_l;\theta)$ and $f_{r+1,\cdots,n|r:n}(y_{r+1}, \cdots,y_{n}|y_r;\theta)$ by 
 $I_{1,\cdots,l:n}(\theta)$, $I_{l+1,\cdots,r \mid l:n}(\theta)$ and $I_{r+1,\cdots,n\mid r:n}(\theta)$, respectively. By using Lemma \ref{faa}, we obtain the expression for  $I_{1,\cdots,l:n}(\theta)$ as
 \begin{equation}
	\label{f11}	
	I_{1,\ldots,l:n}(\theta)=\int_{0}^{\infty}\bigg \langle\frac{\partial}{\partial \theta}\ln h(x;\theta) \bigg \rangle \sum_{i=1}^{l}f_{i:n}(x;\theta)\, dx.
	\end{equation}
 To evaluate the Fisher information of $I_{l+1,\cdots,r \mid l:n}(\theta)$ and $I_{r+1,\cdots,n\mid r:n}(\theta)$, we use the following decomposition
 {\footnotesize{
 \begin{eqnarray}\nonumber
  f_{l+1,\cdots,r \mid l:n}(y_{l+1}, \cdots, y_r|y_l; \theta) &=& \frac{f_{l, l+1, \cdots, r:n}(y_l, y_{l+1}, \cdots, y_r| \theta)}{f_{l}(y_l; \theta)}\\\nonumber
  &=& \frac{f_{l, l+1, \cdots, r:n}(y_l, y_{l+1}, \cdots, y_r; \theta)}{f_{l, l+1, \cdots, r-1:n}(y_l, y_{l+1}, \cdots, y_{r-1}; \theta)}\times \frac{f_{l, l+1, \cdots, r-1:n}(y_l, y_{l+1}, \cdots, y_{r-1}; \theta)}{f_{l:n}(y_l; \theta)}\\\nonumber
  &=& \frac{f_{l, l+1, \cdots, r:n}(y_l, y_{l+1}, \cdots, y_r; \theta)}{f_{l, l+1, \cdots, r-1:n}(y_l, y_{l+1}, \cdots, y_{r-1}; \theta)}\times \frac{f_{l, l+1, \cdots, r-1:n}(y_l, y_{l+1}, \cdots, y_{r-1}; \theta)}{f_{l, l+1, \cdots, r-2:n}(y_l, y_{l+1}, \cdots, y_{r-2}; \theta)}\\\nonumber
  && \times\cdots \frac{f_{l, l+1:n}(y_l, y_{l+1}; \theta)}{f_{l:n}(y_l; \theta)}\\\nonumber
  &=& f_{r|l,\cdots,r-1:n}(y_r| y_l,\cdots, y_{r-1}; \theta) \times f_{r-1|l,\cdots,r-2:n}(y_{r-1}| y_l,\cdots, y_{r-2}; \theta)\\\nonumber
  &&\times \cdots\times f_{l+1|l:n}(y_{l+1}| y_l; \theta)\\\nonumber
  &=& f_{r|r-1:n}(y_r|y_{r-1}; \theta)\times f_{r-1|r-2:n}(y_{r-1}|y_{r-2}; \theta)\times \cdots \times f_{l+1|l:n}(y_{l+1}| y_l; \theta)\\\nonumber
  &=& \prod_{j=l+1}^{r}f_{j|j-1:n}(y_{j}|y_{j-1}; \theta),
 \end{eqnarray}}}and, similarly,
 \begin{eqnarray}\nonumber
  f_{r+1,\ldots,n|r:n}(y_{r+1},\ldots,y_{n}; \theta) &=& \prod_{j=r+1}^{n}f_{j|j-1:n}(y_{j}|y_{j-1}; \theta).
 \end{eqnarray}where $f_{j|j-1:n}(y_{j}|y_{j-1}; \theta)$ represents the conditional density of $Y_{j}$ given $Y_{j-1}=y_{j-1}$ \citep[see][]{Park_2008}. Note that this  also can be interpreted as the density of the first order statistic, say $Z_{1:N_{j-1}}$, of sample size $N_{j-1}=n-j+1$, where $N_{j-1}$ denotes the remaining units on the experiment after occurrence of the $j$th failure, that is, it is the  density of $Z_{1:N_{j-1}}$ truncated at $z \geq y_{j-1}$. Suppose $I_{Z_{1:n_{j-1}} \wedge T_2}(\theta)$ denotes the Fisher information about $\theta$ corresponding to $\{ Z_{1:N_{j-1}} \wedge T_2, {\textbf{I}}(Z_{1:N_{j-1}}\leq T_2)\}$, for $j=l+1,\ldots,r$. Then, the Fisher information corresponding to $f_{j|j-1:n}(y_{j}|y_{j-1}; \theta)$ is given by \citep[see][]{Park_2008}
 \begin{equation}
	\label{park1}
	I_{Z_{1:n_{j-1}} \wedge T_2}(\theta)= \int_{0}^{T_2}\bigg \langle \frac{\partial}{\partial \theta}\ln h(x; \theta) \bigg \rangle f_{j:n}(x; \theta)\, dx.
	\end{equation}  
 Similarly, suppose that $I_{Z_{1:n_{j-1}} \wedge T_1}(\theta)$ denotes the Fisher information about $\theta$ corresponding to $\{ Z_{1:N_{j-1}} \wedge T_1, {\textbf{I}}(Z_{1:N_{j-1}}\leq T_1)\}$, for $j=r+1,\ldots,n$. Then, the Fisher information corresponding to $f_{j|j-1:n}(y_{j}|y_{j-1}; \theta)$ is given as
	\begin{equation}
	\label{park2}
	I_{Z_{1:n_{j-1}} \wedge T_1}(\theta)= \int_{0}^{T_1}\bigg \langle \frac{\partial}{\partial \theta}\ln h(x; \theta) \bigg \rangle f_{j:n}(x; \theta)\, dx.
	\end{equation}   
By using the previous decomposition, the Fisher information corresponding to $f_{l+1,\ldots,r|l:n}(y_{l+1},\ldots,y_{r};\theta)$ and $f_{r+1,\ldots,n|r:n}(y_{r+1},\ldots,y_{n};\theta)$ can be written as    
	\begin{eqnarray}
	\label{park3}
	I_{l+1,\ldots,r\mid l:n}(\theta)&=& \sum_{j=l+1}^{r}  I_{Z_{1:n_{j-1}} \wedge T_2}(\theta), 	\label{park3} \\
	I_{r+1,\ldots,n\mid r:n}(\theta)&=& \sum_{j=r+1}^{n}  I_{Z_{1:n_{j-1}} \wedge T_1}(\theta), 	\label{park4}
	\end{eqnarray}respectively. Therefore, by using (\ref{park1}), (\ref{park3}) can be written as    
\begin{eqnarray}
	\label{f22}
	I_{l+1,\ldots,r\mid l:n}(\theta)&=& \int_{0}^{T_2}\bigg \langle \frac{\partial}{\partial \theta}\ln h(x; \theta) \bigg \rangle \sum_{j=l+1}^{r} f_{j:n}(x; \theta)\, dx  \nonumber\\
	&=& \int_{0}^{T_2}\bigg \langle \frac{\partial}{\partial \theta}\ln h(x; \theta) \bigg\rangle \sum_{j=1}^{r}f_{j:n}(x; \theta)\, dx  \nonumber\\&&  -\int_{0}^{T_2}\bigg \langle \frac{\partial}{\partial \theta}\ln h(x; \theta) \bigg \rangle \sum_{j=1}^{l}f_{j:n}(x;\theta)\, dx  \nonumber\\
	&=& I_{X_{r:n}\wedge T_2}(\theta) - I_{X_{l:n}\wedge T_2}(\theta).
	\end{eqnarray}
The final line follows from the Lemma \ref{lem2fish}. Similarly, by using (\ref{park2}), (\ref{park4}) can be written as
\begin{equation}\label{f33}
I_{r+1,\ldots,n\mid r:n}(\theta) = I_{T_1}(\theta) - I_{X_{r:n}\wedge T_1}(\theta). 
\end{equation}
Therefore, the Fisher information about $\theta$ under Type-II UHCS can be obtained by the equations (\ref{f11}), (\ref{f22}) and (\ref{f33}) as
	\begin{eqnarray}\label{f34}	
		I(\theta)=I_{1,\ldots,l:n}(\theta)+ I_{T_1}(\theta)+ I_{X_{r:n}\wedge T_2}(\theta) - I_{X_{l:n}\wedge T_2}(\theta)-I_{X_{r:n}\wedge T_1}(\theta).
	\end{eqnarray}Hence, the proof follows. 
 
\end{proof}

\begin{rmk}	
	
	The equation (\ref{f34}) represents the Fisher information for one dimensional $\theta$. But we can easily derived Fisher information for vector of parameters $\theta$. The Fisher information for $\theta=(\theta_1,\ldots,\theta_p)$ in Type-II UHCS data is given by
	\begin{eqnarray}		
	\label{mul1}
	I(\theta_1,\ldots,\theta_p)=I_{1,\ldots,l:n}(\theta_1,\ldots,\theta_p)+ I_{T_1}(\theta_1,\ldots,\theta_p)+ I_{X_{r:n}\wedge T_2}(\theta_1,\ldots,\theta_p) \nonumber \\ - I_{X_{l:n}\wedge T_2}(\theta_1,\ldots,\theta_p)-I_{X_{r:n}\wedge T_1}(\theta_1,\ldots,\theta_p)
	\end{eqnarray}
	with 
	\begin{eqnarray*}
			I_{1,\ldots,l:n}(\theta)&=& \int_{0}^{\infty}\bigg \langle\frac{\partial}{\partial \theta}\ln h(x;\theta) \bigg \rangle \sum_{i=1}^{l}f_{i:n}(x;\theta)\, dx., \\		
		I_{X_{m:m:n}\wedge T_1}(\theta_1,\ldots,\theta_p)&=& \int_{0}^{T_1}\bigg\langle\frac{\partial}{\partial \theta}\ln h(x;\theta_1,\ldots,\theta_p) \bigg \rangle \sum_{i=1}^{m}f_{i:m:n}(x;\theta_1,\ldots,\theta_p)\, dx,\\
		I_{X_{l:m:n}\wedge T_1}(\theta_1,\ldots,\theta_p)&=& \int_{0}^{T_1}\bigg\langle\frac{\partial}{\partial \theta}\ln h(x;\theta_1,\ldots,\theta_p) \bigg \rangle \sum_{i=1}^{l}f_{i:m:n}(x;\theta_1,\ldots,\theta_p)\, dx,
	\end{eqnarray*}
	where $\langle A\rangle$ represents the matrix $A.A^{'}$ for $A \in \mathbb{R}^{p}$ and $\frac{\partial}{\partial \theta}\ln h(x;\theta_1,\ldots,\theta_p)$ is the vector $\big ( \frac{\partial}{\partial \theta_1}\ln h(x;\theta_1,\ldots,\theta_p),\ldots, \frac{\partial}{\partial \theta_p}\ln h(x;\theta_1,\ldots,\theta_p)\big)^{'}$.
\end{rmk}

\section{Bayesian optimal life-testing plans}
\label{S3}
In this section, we consider determination of optimal life-testing plans under Type-II UHCS under Bayesian framework. By optimal life-testing plan, we refer to the best choice of the design parameters $(n, r, l, T_{1}, T_2)$ of the Type-II UHCS for which a suitably chosen utility function is optimized. Bayesian design comes into picture when the uncertainty of the parameters of the underlying lifetime distribution can be modeled through a probability distribution, called prior distribution. This is combined with the likelihood function to construct posterior distribution and, thus, posterior is used for inference under Bayesian setups. Most of the existing works in Bayesian optimal design consider a suitable posterior variance measure as utility function. For instances, sample size determination problem in Bayesian accelerated life-testing was discussed by \cite{Polson_book}. Bayesian design under Type-II censoring scheme with Weibull lifetime model was described by \cite{Zhang_2005}. \cite{Zhang_2006} also developed Bayesian design for accelerated life-testing assuming acceleration model as linear in parameters. \cite{Hong_2015} proposed a Bayesian design in which posterior variance criterion was computed by using a large sample approximation technique under Type-II censoring. \cite{Ritwik_2018} extends their idea to hybrid censoring schemes. Some more relevant works on Bayesian design can be found in \cite{Thyregod_1975, Zahar_1996, Biswa_2009}. \\

In this work, a Fisher information based variance criterion is minimized subject to a budget constraint. For computing, we use the method proposed in \cite{Hong_2015} and \cite{Ritwik_2018}. Let us define $\mathcal{D}=(n, r, l, T_{1}, T_2)$. To compute the optimal value for $\mathcal{D}$, the following optimization problem is formulated 
\begin{eqnarray}\label{multi1}
&&\underset{\mathcal{D}}{\text{Maximize}}~  E_{\tiny{\mbox{data}};\mathcal{D}} \left[ \ln [\mbox{Det}(I(\theta))|\mbox{data}]  \right]   \\\nonumber
&&\mbox{Subject to} \\\label{opt2}
&& C_f E_{\tiny{\mbox{data}};\mathcal{D}} [ E(D)|\mbox{data} ]    + C_t E_{\tiny{\mbox{data}};\mathcal{D}} [ E(\xi)|\mbox{data} ] \leq C_b,
\end{eqnarray}
where $C_f$, $C_t$ and $C_b$ are the cost per unit of failed item, the cost per unit of duration of life-testing and a pre-fixed budget amount, respectively, and $\mbox{Det}(I(\theta))$ represents the determinant of the Fisher information matrix $I(\theta)$ defined in Theorem 5. Here, the notation $E_{\tiny{\mbox{data}};\mathcal{D}} [\cdot |\mbox{data}]$ represents the Bayesian structure of the design problem. The quantities in (\ref{multi1}) and (\ref{opt2}) can be computed by using any standard MCMC technique, however, the computation will be time consuming. Instead, we are using the large sample approximation approach used in \cite{Hong_2015} and \cite{Ritwik_2018}. They have shown that the approximation technique significantly reduces the computational complexity without loosing much efficiency. Along with the lines of \cite{Hong_2015} and \cite{Ritwik_2018}, assuming $\pi(\theta)$ as the joint prior distribution for $\theta$, the above optimization problem can be approximated as   

\begin{eqnarray}\label{bopt1}
&&\underset{\mathcal{D}}{\text{Maximize}}~ \psi(\mathcal{D}\mid \theta)    \\\nonumber
&&\mbox{Subject to} \\\nonumber
&& C_f \psi_{\tiny{\mbox{Fail}}}(\mathcal{D}\mid \theta) + C_t \psi_{\tiny{\mbox{Duration}}}(\mathcal{D}\mid \theta) \leq C_b.
\end{eqnarray}
where, $\psi(\mathcal{D}\mid \theta) = \int_{\theta} \ln [\mbox{Det}(I(\theta))] \pi(\theta) \mbox{d}\theta$, $\psi_{\tiny{\mbox{Fail}}}(\mathcal{D}\mid \theta) = \int_{\theta} E(D) \pi(\theta) \mbox{d}\theta$, and, $\psi_{\tiny{\mbox{Duration}}}(\mathcal{D}\mid \theta)=\int_{\theta} E(\xi) \pi(\theta) \mbox{d}\theta$.\\

For the purpose of illustration, it is assumed that the actual lifetime of the testing unit follows a log-normal distribution LN$(\mu, \tau)$ with the probability density function (PDF) and the cumulative distribution function (CDF) given by
\begin{equation}\nonumber
 f_X(x; \mu, \tau) = \sqrt{\frac{\tau}{2 \pi}} x^{-1} e^{-\frac{\tau}{2}(\ln x -\mu)^{2}},\,\, x > 0, \,-\infty < \mu < \infty,\, \tau >0,
\end{equation}
and
\begin{equation}\nonumber
 F_X(x; \mu, \tau) =\Phi \textbf [\sqrt{\tau}\,(\ln x- \mu) \textbf]\,,~~ x>0,
\end{equation}respectively, where $\mu$ and $\tau$ denote unknown parameters of the distribution. Here, $\Phi(\cdot)$ is the CDF of standard normal distribution. The log-normal distribution is quite popular distribution in reliability studies because of the flexibility of its shape. It is also assumed that the joint prior distribution of $\theta = (\mu, \tau)$ follows a normal-gamma distribution with probability density function 

\begin{eqnarray}\nonumber
\pi(\mu,\tau)&=& \frac{b_1^{a_1}}{\Gamma {a_1}}\sqrt{\frac{q_2}{2\pi}}\,\,\tau^{a_1-\frac{1}{2}}\,\,e^{-\frac{q_2\tau}{2}(\mu-p_2)^2-b_1\tau}\\\nonumber
&=& \frac{b_1^{a_1}}{\Gamma {a_1}} \,\tau^{a_1-1}\,\,e^{-b_1\tau} \times \frac{1}{\sqrt{2\pi}}\frac{1}{\sqrt{\frac{1}{\tau q_2}}}\,e^{-\frac{(\mu-p_2)^2}{2\frac{1}{\tau q_2}}}\\
&=&\pi(\tau) \times \pi(\mu \mid \tau),
\end{eqnarray}
where, $\pi(\tau) \sim Gamma(a_1,b_1)$ and $\pi(\mu \mid \tau) \sim N_{\mu \mid \tau}(p_2,q_2)$. The hyper parameters $a_1,b_1,p_2$ and $q_2$  reflect prior knowledge about unknown parameters of interest, where $a_1,b_1 >0, q_2>0$ and $-\infty<p_2<\infty$. Now the optimization problem (\ref{bopt1}) is a mixed integer non-linear programming problem in $(n, r, l, T_1 , T_2)$. Algorithm 1, a complete search technique, is proposed for solving (\ref{bopt1}). For all possible combination of $n,r$, and, $l$,  we  solve the constraint optimization problem (\ref{bopt1}) and find the optimal $(n^*,r^*,l^*,T_1^*,T_2^*)$. For solving the constraint optimization problem (\ref{bopt1}), we use \textit{nloptr} package in R-language. 
\begin{algorithm}[!h]
	\label{algo*}
	\small
	\SetAlgoLined
	Fix the values of hyper parameters $a_1,b_1,p_2$, and, $q_2$.\\
	Choose a sufficiently large number $N$.\\
	\For{$i=1,2,\cdots,N$}{
		Generate, $\tau_{i} \sim Gamma(a_1,b_1)$ and $\mu_{i} \sim N_{\mu \mid \tau}(p_2,q_2)$
	}
	By Monte Carlo approximation, $\psi(\mathcal{D}\mid \theta)$,  $\psi_{\tiny{\mbox{Fail}}}(\mathcal{D}\mid \theta)$, and $\psi_{\tiny{\mbox{Duration}}}(\mathcal{D}\mid \theta)$ are approximated as  $\psi^*(\mathcal{D}) = \frac{1}{N}\sum_{i=1}^{N} \ln [\mbox{Det}(I(\theta_i))]$, $\psi_{\tiny{\mbox{Fail}}}^*(\mathcal{D})=\frac{1}{N}\sum_{i=1}^{N}\psi_{\tiny{\mbox{Fail}}}(\mathcal{D}\mid \theta_i)$, and, $\psi_{\tiny{\mbox{Duration}}}^*(\mathcal{D})=\frac{1}{N}\sum_{i=1}^{N} \psi_{\tiny{\mbox{Duration}}}(\mathcal{D}\mid \theta_i)$, $\theta_i = (\mu_i, \tau_{i})$. \\
	
	Fix the values of cost parameters $C_f,C_t$ and $C_b$.\\
	Choose a large value for $n$, say $n^*$.\\	
	
	\For{$n=1,2,\cdots,n^*$}{
		\For{$r=1,2,\cdots,n$}{
			Fix, $l = \lceil(r/2) \rceil$ \\
			Solve the constrained optimization problem (\ref{bopt1}).
		}
		
	}
	Select $(n^*,r^*,{T_1}^*,T_2^*)$ as optimal solution for which $\psi^*(\mathcal{D})$ is maximum.
	\caption{\small{}}
\end{algorithm}

\subsection{ Numerical illustration of optimal plans} 

 For the purpose of illustration, we consider two priors. The priors are determined by taking different means and variances of $\mu$ and $\tau$.
 \begin{description}
  \item[Prior 1:] $E(\mu)=-0.5$, var$(\mu)=0.5$, $E(\tau)=1.5$ and var$(\tau)=1$, we compute the corresponding hyper parameter values as $a_1=2.25; b_1=1.5; p_2=-0.5; q_2=2.4$.
  \item[Prior 2:] $E(\mu)=0.01$,var$(\mu)=0.05$, $E(\tau)=0.5$ and var$(\tau)=0.05$, we compute the corresponding hyper parameter values as $a_1=5; b_1=10; p_2=0.01; q_2=50$.
 \end{description}Note that, for the shake of illustrations, we consider only two priors but, in practice, one can consider other priors by taking suitable combinations of mean and variance of the distribution of $\mu$ and $\tau$. By keeping fix the cost $(C_f, C_t) = (10, 15)$, we compute the optimal schemes for different budget cost $C_b$. Results are reported in Tables 1-4. In Tables 1 and 2, we provide optimal $(r,T_1,T_2)$, $l = \lceil r*0.5 \rceil$, for fix values of $n$ under prior 1 and prior 2, respectively. It is observed that when budget $C_b$ increases, $T_2$ and $r$ increase, also the optimal value. This is because, intuitively, increasing budget allows increasing duration of testing resulting more failures to be observed. It is also observed that when $n$ increases, $(T_1,T_2), r$ and the  optimal value increase, as expected. For fixed budget $C_b$ and sample size $n$, Table-2 (corresponding to prior 2) has higher optimal values in comparison with Table 1 (corresponding to prior 1). Intuitively, this is because the variances of the hyper parameters in Prior 2 is lesser than that of Prior 1 which signifies less uncertainty (in other words, more relevant information about the unknown parameters) resulting higher Fisher information. Instead of fixing $n$, we can also find $n$ optimally for the same cost values. The results are reported in Tables 3 and 4. From Tables 3 and 4, it is observed that when budget increases, all the design parameters values increase, as expected.

 \begin{table}[htbp]
 	\footnotesize
 	\label{table:1}
 	\centering
 	\caption{ Optimal Bayesian life-testing plans under Type-II UHCS with Prior 1 for fix $n$} 
 	\begin{center}
 		\begin{tabular}{ccccccc}\toprule		
 			
 			$n$ &$(C_f, C_t)$& $C_b$ & $(T_1,T_2)$ & $r$ & $l$ &$D$-optimal \\
 			\toprule
 			20& \multirow{15}{*}{(10, 15)} & 150 & (0.7044, 1.4088)  &13 & 7  &  4.4623\\
 			&        & 180 &  (1.1133,  2.2267) & 15 &8& 4.7102  \\ \vspace{0.2cm}
 			&        & 200 &  (1.3148, 2.6296) & 17 &9 & 4.8408 \\
 			30&        & 180 &  (0.4891, 0.9782)& 17 &  9& 4.9483  \\
 			&        & 200 &  (0.5863, 1.1726)& 19 & 10& 5.1583  \\ \vspace{0.2cm}
 			&        & 250 &  (0.8656, 1.7312) & 23 &12& 5.5119  \\
 			40&        & 250 &  (0.5549, 1.1099)& 23&12 &    5.6165   \\
 			&        & 280 &  (0.6783, 1.3567)& 26 & 13& 5.7941    \\   \vspace{0.2cm}
 			&        & 300 &  (0.7281, 1.4562)& 29  & 15& 5.9126   \\
 			50&        & 300 &  (0.4979, 0.9958)& 28  & 14& 6.0064 \\
 			&        & 350 &  (0.7088, 1.4176)&33  &  17& 6.2299 \\     \vspace{0.2cm}
 			&        & 400 &  (0.9171, 1.8343)&37  & 19& 6.4950 \\
 			60&        & 400 &  (0.6552, 1.3104)&37  & 19& 6.6279 \\
 			&        & 450 &  (0.8301, 1.6603)&42 & 21&  6.7682      \\
 			&        & 500 &  (1.1184, 2.2369)& 47 &  24& 6.9497      \\

 			\bottomrule
 		\end{tabular}
 	\end{center}
 \end{table}
 
 \begin{table}[htbp]
 	\footnotesize
 	\label{table:2}
 	\centering
 	\caption{  Optimal Bayesian life-testing plans under Type-II UHCS with Prior 2 for fix $n$} 
 	\begin{center}
 		\begin{tabular}{ccccccc}\toprule		
 			
 			$n$ &$(C_f, C_t)$& $C_b$ & $(T_1,T_2)$ & $r$&l & $D$-optimal \\
 			\toprule

 			20&\multirow{15}{*}{(10, 15)}  & 150 & (0.8816, 1.7632)  &  14 & 7&5.3539 \\
 			&        & 180 &  (1.1976, 2.3952) & 18 &9& 5.5517  \\ \vspace{0.2cm}
 			&        & 200 &  (1.5229, 3.0459) & 20 &10& 5.6795  \\
 			30&        & 180 &  (0.7134, 1.4269)& 17 &9& 5.9033  \\
 			&        & 200 &  (0.8302, 1.6605)& 19 &10& 6.1167  \\ \vspace{0.2cm}
 			&        & 250 &  (1.2361, 2.4723) & 24 &12& 6.3872  \\
 			40&        & 250 & (0.9430, 1.8860)& 23 & 12& 6.6131     \\
 			&        & 280 &  (0.9811, 1.9622)& 27 & 14&  6.7884 \\   \vspace{0.2cm}
 			&        & 300 &  (1.0920, 2.1841)&29 &  15& 6.8386 \\
 			50&        & 300 &  (0.9102, 1.8205)& 28 & 14& 6.9971       \\
 			&        & 350 &  (1.1405, 2.2810)  & 33  &17& 7.2564  \\     \vspace{0.2cm}
 			&        & 400 &  (1.3554, 2.7108)  & 39  &20& 7.4236  \\
 			60&        & 400 &  (1.0916, 2.1833) & 38  &19& 7.5210  \\
 			&        & 450 &  (1.3157, 2.6315) &  43 &22&  7.7072       \\
 			&        & 500 &  (1.5655, 3.1310) &  49  & 25& 7.8346       \\

 			\bottomrule
 		\end{tabular}
 	\end{center}
 \end{table}

 \begin{table}[htbp]
 	\footnotesize
 	\label{table:3a}
 	\centering
 	\caption{  Optimal Bayesian life-testing plans $(n,r,l,T_1,T_2)$ under Type-II UHCS with Prior 1}
 		
 	\begin{center}
 		\begin{tabular}{cccc}\toprule		
 			
 			$(C_f, C_t)$& $C_b$ & $(n,r,l,T_1,T_2)$  & $D$-optimal \\
 			\toprule	
 			(10,15)&80   & (10,7, 0.5662912 , 1.7648)    & 3.128769 \\ 
 			&300   & (20, 15, 8, 1.1004, 2.2008)    & 4.8997 \\ 
 			& 350  &  (23, 19, 10, 1.0389, 2.0778)   & 4.9206 \\ 
 			& 400  &  (26, 22, 11, 1.1150, 2.2300)   & 5.3827 \\ 
 			& 450  & (29, 25, 13, 1.1575, 2.3150)    &  5.5152\\ 
 			& 500  &  (32, 27, 14, 1.4374, 2.8748)   &  5.6757\\

 			\bottomrule
 		\end{tabular}
 	\end{center}
 \end{table}

 \begin{table}[htbp]
	\footnotesize
	\label{table:3b}
	\centering
	\caption{  Optimal Bayesian life-testing plans $(n,r,l,T_1,T_2)$ under Type-II UHCS with Prior a1 = 2.25; b1 = 1.50; p2 = -0.50; q2 = 12.00 with mean and variance 0.01,0.05,0.5,0.05}
	
	\begin{center}
		\begin{tabular}{cccc}\toprule		
			
			$(C_f, C_t)$& $C_b$ & $(n,r,l,T_1,T_2)$  & $D$-optimal \\
			\toprule	
			(10,15)&80  & (10,7,7, 0.8824, 1.7648)    & 4.4485 \\ 
			&300   & (20, 15, 8, 1.1004, 2.2008)    & 4.8997 \\ 
			& 350  &  (23, 19, 10, 1.0389, 2.0778)   & 4.9206 \\ 
			& 400  &  (26, 22, 11, 1.1150, 2.2300)   & 5.3827 \\ 
			& 450  & (29, 25, 13, 1.1575, 2.3150)    &  5.5152\\ 
			& 500  &  (32, 27, 14, 1.4374, 2.8748)   &  5.6757\\

			\bottomrule
		\end{tabular}
	\end{center}
\end{table}

 \begin{table}[htbp]
 	\footnotesize
 	\label{table:4}
 	\centering
 	\caption{ Optimal Bayesian life-testing plans $(n,r,l,T_1,T_2)$ under Type-II UHCS with Prior 2}
 		
 	\begin{center}
 		\begin{tabular}{cccc}\toprule		
 			
 			$(C_f, C_t)$& $C_b$ & $(n, r, l, T_1, T_2)$  & $D$-optimal \\
 			\toprule	
 			(10,15)&250   & (18, 14, 7, 0.8887, 1.7775)    &  5.1840\\ 
 			&300   &  (22, 16, 8, 0.9320, 1.8641)   & 5.5697 \\ 
 			& 350  &  (25, 19, 10, 1.0558, 2.1116)   &  5.8435\\ 
 			& 400  &  (28, 23, 12, 1.1748, 2.3497)   & 6.1595 \\ 
 			& 450  &   (32, 25, 13, 1.1594, 2.3189)  &  6.4834\\ 
 			& 500  & (35, 28, 14, 1.2939, 2.5878)    &  6.7128\\
 			
 			\bottomrule
 		\end{tabular}
 	\end{center}
 \end{table}

\section{Conclusion}
\paragraph{}
This article establishes the explicit expressions of the expected number of failures, expected duration of testing and the Fisher information matrix for the unknown parameters of the underlying lifetime model when the Type-II unified hybrid censoring scheme is employed. Using these quantities, optimal Bayesian design of life-testing plans are also discussed in this article. One constrained optimality criterion is demonstrated through numerical example. Consideration of other relevant optimality criteria under various lifetime models could be possible further extensions of the present article as future research interests. Present research is going on in this direction and we hope to report our findings in future articles.

.

\bibliographystyle{unsrt}
\bibliography{BaysWarranty}

\begin{thebibliography}{10}

\bibitem{Epstein_1954}
B.~Epstein.
\newblock Truncated life test in the exponential case.
\newblock {\em Annals of Mathematical Statistics}, {\bf{25}}:555--564, 1954.

\bibitem{Childs_2003}
A.~Childs, B.~Chandrasekar, N.~Balakrishnan, and D.~Kundu.
\newblock Exact likelihood inference based on \mbox{Type I} and \mbox{Type II}
  hybrid censored samples from the exponential distribution.
\newblock {\em Annals of the Institute of Statistical Mathematics},
  {\bf{55}}:319--330, 2003.

\bibitem{Chandrasekar_2004}
B.~Chandrasekar, A.~Childs, and N.~Balakrishnan.
\newblock Exact likelihood inference for the exponential distribution under
  generalized \mbox{Type-I and Type-II} hybrid censoring.
\newblock {\em Naval Research Logistics}, {\bf{51}}:994 -- 1004, 2004.

\bibitem{Balakrishnan_2008}
N.~Balakrishnan, A.~Rasouli, and N.~S. Farsipour.
\newblock Exact likelihood inference based on an unified hybrid censored sample
  from the exponential distribution.
\newblock {\em Journal of Statistical Computation and Simulation},
  {\bf{78}}:475 -- 488, 2008.

\bibitem{Zahar_1996}
A.~M. Zaher, M.~A. Ismail, and M.~S. Bahaa.
\newblock Bayesian \mbox{Type-I} censored designs for the \mbox{Weibull}
  lifetime model: \mbox{Information} based criterion.
\newblock {\em The Egyptian Statistical Journal}, {\bf{40}}:127--150, 1996.

\bibitem{Zhang_2005}
Y.~Zhang and W.~Q. Meeker.
\newblock Bayesian life test planning for the \mbox{Weibull} distribution with
  given shape parameter.
\newblock {\em Metrika}, {\bf{61}}:237--249, 2005.

\bibitem{Hong_2015}
Y.~Hong, C.~King, Y.~Zhang, and W.~Q. Meeker.
\newblock Bayesian life test planning for log-location-scale family of
  distributions.
\newblock {\em Journal of Quality Technology}, {\bf{47}}:336--350, 2015.

\bibitem{Ritwik_2018}
R.~Bhattacharya and B.~Pradhan.
\newblock Bayesian design of life testing plans under hybrid censoring scheme.
\newblock {\em Quality and Reliability Engineering International},
  {\bf{34}}:93--106, 2018.

\bibitem{Roy_2017}
S.~Roy and B.~Pradhan.
\newblock Bayesian optimum life testing plans under progressive \mbox{Type-I}
  interval censoring scheme.
\newblock {\em Quality and Reliability Engineering International},
  {\bf{33}}:2727--2737, 2017.

\bibitem{Park_2009}
S.~Park and N.~Balakrishnan.
\newblock On simple calculation of the \mbox{Fisher} information in hybrid
  censoring schemes.
\newblock {\em Statistics and Probability Letters}, {\bf{79}}:1311--1319, 2009.

\bibitem{Park_2008}
S.~Park, N.~Balakrishnan, and G.~Zheng.
\newblock Fisher information in hybrid censored data.
\newblock {\em Statistics and Probability Letters}, {\bf{78}}:2781--2786, 2008.

\bibitem{Polson_book}
N.~G. Polson.
\newblock A \mbox{Bayesian} perspective on the design of accelarated life
  tests.
\newblock In {\em Advances in Reliability (A. P. Basu, eds.)}, pages 321--330.
  Elsevier, New York, 1993.

\bibitem{Zhang_2006}
Y.~Zhang and W.~Q. Meeker.
\newblock Bayesian methods for planning accelerated life tests.
\newblock {\em Technometrics}, {\bf{48}}:49--60, 2006.

\bibitem{Thyregod_1975}
P.~Thyregod.
\newblock Bayesian single sampling plans for life-testing with truncation of
  the number of failures.
\newblock {\em Scandinavian Journal of Statistics}, {\bf{2}}:61--70, 1975.

\bibitem{Biswa_2009}
D.~Kundu and B.~Pradhan.
\newblock Bayesian inference and life testing plans for generalized exponential
  distribution.
\newblock {\em Science in China Series A: Mathematics}, {\bf{52}}:1373--1388,
  2009.

\end{thebibliography}

\end{document}